\documentclass[11pt]{article}%

\usepackage{a4wide}
\usepackage{amsmath,amssymb,amsthm}
\usepackage{enumerate}
\usepackage{tikz-cd}
\usepackage{multicol}
\usepackage{hyperref}
\hypersetup{
    colorlinks=true,  
    linkcolor=blue, 
    citecolor=red,   
    urlcolor=magenta 
}
\usepackage[nameinlink]{cleveref}
\usepackage{mathtools}
\usepackage[normalem]{ulem} 
\usepackage{graphicx}

\newtheorem{lemma}{Lemma}[section]

\newtheorem{theorem}[lemma]{Theorem}
\newtheorem{observation}[lemma]{Observation}
\newtheorem{conjecture}[lemma]{Conjecture}

\newcommand{\R}{{\mathbb{R}}}

\renewcommand{\AA}{{\mathcal{A}}}
\newcommand{\CC}{{\mathcal{C}}}
\newcommand{\FF}{{\mathcal{F}}}
\newcommand{\KK}{{\mathcal{K}}}
\newcommand{\LL}{{\mathcal{L}}}

\DeclareMathOperator{\dist}{dist}

\newcounter{sideremark}

\title{Non-acyclic spaces of line transversals\thanks{Supported by ERC-CZ project LL2328 of the Ministry of Education of Czech Republic.}}
\author{Haochi Jiang \and Martin Tancer}

\date{\small Department of Applied Mathematics, Faculty of Mathematics and
Physics, \\ Charles University, Czech Republic}
\begin{document}

\maketitle
\begin{abstract}
Cheong, Goaoc and Holmsen conjectured that every connected component of the space of line
transversals to a family of pairwise disjoint open convex sets in $\R^d$ is acyclic. We disprove
this conjecture by showing that the homology may be nontrivial in any fixed
  dimension provided that $d$ is large enough. More precisely, we show that for
  every $n \geq 1$ there is a finite family of pairwise disjoint open convex sets in
  $\R^{3n}$ such that the $(n-1)$st homology (over an arbitrary ring) of the
  space of line transversals to this family is nonzero. 
\end{abstract}

\section{Introduction}

  Let $\FF$ be a family of subsets of $\R^d$. A line $\ell$ in $\R^d$ is 
  \emph{transversal} to $\FF$ if $\ell$ meets every member of $\FF$. A
  collection of all lines transversal to $\FF$ with a natural topology on it 
  forms a topological space called the \emph{space of transversals} to $\FF$.
  (The topology on this space is explained in the beginning of
  Section~\ref{s:compact}.)

 Cheong, Goaoc and Holmsen~\cite[Theorem~1.3]{cgh24} proved that for any family
 $\FF$ of at least two disjoint open convex sets in $\R^3$, every connected
 component of the space of line transversals to $\FF$ is acyclic. Furthermore,
 they conjectured that their result generalizes to higher dimensions:

\begin{conjecture}[Conjecture~4.5 of~\cite{cgh24}]
  Let $\FF$ be any family of (at least two) pairwise disjoint open convex sets in
  $\R^d$. Then every connected component of the space of line transversals to
  $\FF$ is acyclic.
\end{conjecture}

We disprove this conjecture. As an intermediate step, we first disprove a
variant of the conjecture with a (finite) family of compact convex sets. (In
the theorems below, by $\tilde H_{n-1}(\cdot)$ we mean the reduced $(n-1)$st homology over
an arbitrary ring.)

\begin{theorem}
\label{t:compact_intro}
  For every $n \geq 1$ there is a finite family $\CC$ of pairwise disjoint
  compact convex set in $\R^{3n}$ such that $\tilde H_{n-1}(\LL)\ne 0$ where $\LL$ is
  the space of line transversals to $\FF$. 
\end{theorem}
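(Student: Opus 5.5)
The plan is to build $\CC$ as a ``product'' of $n$ copies of a three-dimensional gadget whose transversal space is essentially two points, i.e.\ a $0$-sphere $S^0$. Taking joins of $n$ copies of $S^0$ gives an $(n-1)$-sphere, which has nonzero $\tilde H_{n-1}$ over any ring. So the goal is a family in $\R^{3n}=\R^3\times\cdots\times\R^3$ whose space of line transversals deformation retracts onto, or is homotopy equivalent to, the join $S^0*\cdots*S^0\cong S^{n-1}$.

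\textbf{Step 1: the basic gadget ($n=1$).} In $\R^3$ I would take a finite family of pairwise disjoint compact convex sets, for instance thin segments or small discs placed around a ``twisted'' configuration, whose transversal lines form exactly two components. Each component should be contractible, a small neighborhood of one line $\ell_+$ or $\ell_-$. Any family with disconnected transversal space already gives $\tilde H_0\neq 0$, so the case $n=1$ is immediate. I would also record a description of the directions: the direction of every transversal is close to $\pm$-versions of two fixed unit vectors $u_+,u_-$. I expect to choose the gadget so that transversals need not pass through all sets in a fixed order, which keeps the directions flexible.

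\textbf{Step 2: combining coordinates.} Write a point of $\R^{3n}$ as $(x_1,\dots,x_n)$ with $x_i\in\R^3$. A line $\ell(t)=p+tv$ projects to each factor either as a line (if $v_i\neq 0$) or as a point (if $v_i=0$). For each block $i$ I would place copies $C\times\prod_{j\ne i}B_j$ of the gadget sets, where the $B_j$ are large balls, with the gadgets in different blocks placed far apart along a separating direction so that the sets are pairwise disjoint. A line meets all block-$i$ sets iff its $i$-th projection is a transversal of gadget $i$, or the projection is a point lying in all gadget sets. Since the gadget sets are disjoint, the point case is impossible, so every $v_i\ne0$. That is too rigid, since it gives a product of $S^0$'s, i.e.\ $2^n$ points rather than a sphere.

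To obtain the join, I will allow degeneracy. I would add to each gadget a common point region, making the sets of gadget $i$ share a small common intersection after thickening, while still keeping the overall family disjoint via the other coordinates. Then $v_i=0$ becomes allowed, and the normalized direction $(v_1,\dots,v_n)/|v|$ parametrizes join coordinates $|v_i|$, with the $S^0$-choice in each nonzero block. The resulting space should retract onto $\{v: v_i/|v_i|\in\{u^i_\pm\}\}\cap S^{3n-1}$, which is precisely the join $S^0*\cdots*S^0$.

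\textbf{Main obstacle.} The hard part is making Step 2 honest. Each block must permit both ``pass through as a line in one of two directions'' and ``collapse to a point'', while all sets across the whole family remain pairwise disjoint, compact and convex. Then one needs a deformation retraction of $\LL$ to the join, presumably by first contracting the affine (position) part fibrewise, then showing the direction set is homotopy equivalent to the join. I would first try a direct construction where disjointness is forced by one extra separating coordinate, and prove the retraction by straight-line homotopies in each block. If that fails, I would fall back to computing $\tilde H_{n-1}$ via a Mayer--Vietoris argument over the blocks.
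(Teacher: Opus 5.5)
There is a genuine gap. Step~1 is fine: the paper's $n=1$ example (three rulings $\ell_0,\ell_1,\ell_2$ of the paraboloid $z=xy$, plus a fourth line meeting it in two points) is such a gadget. For $n\ge 2$, however, the proposal contains neither a construction nor an argument. Everything rests on Step~2, which you flag as unresolved, and the mechanism you suggest there does not yield $S^0*\cdots*S^0$.

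\begin{itemize}
\item Shrinking $|v_i|$ does not shrink the $i$th projection to a point. It is a whole line for every $v_i\neq 0$; only the timing relative to the other blocks changes.
\item Suppose the block-$i$ sets share a region with interior, and a line with $v_i=0$ meets every set in an interior point, as your thickening suggests. Then transversality is an open condition at that line. Replacing $v_i$ by \emph{any} sufficiently small vector of $\R^3$ keeps the line transversal, because at the old hitting times the other coordinates are unchanged and the block-$i$ coordinate moves only slightly. So near $\{v_i=0\}$ the normalized block-$i$ direction sweeps out all of $S^2$, not just the two directions $u^i_\pm$. The cone-over-$S^0$ structure a join requires is absent.
\item With sets $C\times\prod_{j\neq i}B_j$, the blocks must be separated to be disjoint. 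Once they are, your ``iff'' in Step~2 fails, because hitting times are coupled across blocks.
\item Lines are unoriented, so block directions are $\pm u^i_\pm$ up to a single global sign. Hence $\{v\colon v_i/|v_i|\in\{u^i_\pm\}\}$ is not even the right target, and the retraction of $\LL$ onto a set of directions is asserted rather than argued.
\end{itemize}

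What is missing is a rigid mechanism that produces a whole sphere of transversals, together with a continuous detector of that sphere. In the paper, the flats $F^i_a=\R^{3(i-1)}\times\ell_a\times\R^{3(n-i)}$ for three values of $a$ force the $i$th projection of every transversal to be a ruling $m_{r_i}$ of the other family. So every transversal lies in the $n$-flat $\prod_i m_{r_i}$. One more flat $F_0$ meets that $n$-flat in a single point, and this point exists iff $\sum_i (r_i-1)^2=\frac12$. Then $\ell\mapsto \ell\cap F_0$ is a continuous map $f\colon \LL\to R\cong S^{n-1}$.

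An explicit family $g\colon S^{n-1}\to\LL$ is checked to meet the truncated compact sets; disjointness is arranged by the halfspaces $y_j\ge 6$ and the scalings $8^{n-i}$. Since $f\circ g$ is a homeomorphism, $\tilde H_{n-1}(S^{n-1})$ is a retract of $\tilde H_{n-1}(\LL)$. The sphere comes from where transversals cross $F_0$, not from a join of direction sets. Only an outer bound on $\LL$ plus a section is needed, never a homotopy equivalence.
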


Then, we inflate the sets in the construction required to prove
Theorem~\ref{t:compact_intro} and we obtain counterexamples to the original
conjecture.

\begin{theorem}
\label{t:inflated_intro}
  For every $n \geq 1$ there is a finite family $\CC$ of pairwise disjoint
  open (bounded) convex set in $\R^{3n}$ such that $\tilde H_{n-1}(\LL)\ne 0$ where $\LL$ is
  the space of line transversals to $\FF$.
\end{theorem}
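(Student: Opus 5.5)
The plan is to derive Theorem~\ref{t:inflated_intro} from Theorem~\ref{t:compact_intro} by a thickening argument. Let $\CC=\{C_1,\dots,C_m\}$ be the compact family from Theorem~\ref{t:compact_intro} and $\LL$ its space of transversals. For $\varepsilon>0$ let $C_i^\varepsilon$ be the open $\varepsilon$-neighbourhood of $C_i$. It is open, bounded and convex. Since the $C_i$ are compact and pairwise disjoint, their mutual distances are bounded below by some $\delta>0$, so for $\varepsilon<\delta/2$ the sets $C_i^\varepsilon$ are still pairwise disjoint. Write $\LL_\varepsilon$ for the space of transversals to $\CC^\varepsilon=\{C_i^\varepsilon\}$. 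Then
\[
\LL\subseteq \LL_\varepsilon\subseteq \LL'_{\varepsilon}\subseteq \LL_{2\varepsilon},
\]
where $\LL'_\varepsilon$ is the space of transversals to the closed $\varepsilon$-neighbourhoods. Moreover $\bigcap_\varepsilon \LL_\varepsilon=\LL$, since a line meeting every $C_i^\varepsilon$ for all $\varepsilon$ meets each $C_i$ by compactness. All transversals meet a fixed ball, so we work in the compact subset of the (Grassmannian-type) space of lines that meet this ball.

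The key step is continuity of homology along this nested family. The $\LL'_\varepsilon$ are compact and decrease to $\LL$, so \v{C}ech homology satisfies $\check H(\LL)=\varprojlim \check H(\LL'_\varepsilon)$, or at least there is such a statement for cohomology, which is continuous. However, the nonvanishing of $\tilde H_{n-1}(\LL)$ in Theorem~\ref{t:compact_intro} is presumably witnessed concretely, so I would use that witness directly. I expect the proof of Theorem~\ref{t:compact_intro} to exhibit a map $f\colon S^{n-1}\to\LL$ together with a map $g\colon\LL\to S^{n-1}$ (or to some space with nontrivial $(n-1)$-homology) such that $g\circ f$ induces a nonzero map on $\tilde H_{n-1}$. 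For instance, $g$ could record the direction of the line, or which side of certain sets the line passes. The plan is to extend $g$ to $\LL_\varepsilon$ for small $\varepsilon$. Then $f_*$ followed by inclusion into $\LL_\varepsilon$ gives a class whose image under $g_*$ is nonzero, so $\tilde H_{n-1}(\LL_\varepsilon)\neq0$.

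To extend $g$, I would argue in one of two ways. The first is that $g$ is defined by a formula valid on an open neighbourhood $U$ of $\LL$ in the space of lines, for example whenever some defining quantity stays nonzero. The second is to use that $S^{n-1}$ is an ANR: then $g$ extends to an open neighbourhood $U\supseteq\LL$ in the metrizable space of lines. By compactness and the nested intersection property, $\LL'_\varepsilon\subseteq U$ for small $\varepsilon$, and hence $\LL_\varepsilon\subseteq U$. Restricting the extension gives $\tilde g\colon \LL_\varepsilon\to S^{n-1}$ with $\tilde g\circ\iota\circ f=g\circ f$. The ANR extension works for an arbitrary continuous $g$, so this step needs nothing specific about the construction.

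The main obstacle is making sure that the nontriviality in Theorem~\ref{t:compact_intro} really does factor through such a map $g$ into an ANR, rather than being only an abstract homology computation. If it does not, the fallback is the continuity argument. Using the sandwich $\LL_\varepsilon\subseteq\LL'_\varepsilon\subseteq\LL_{2\varepsilon}$, singular homology of $\LL_\varepsilon$ and \v{C}ech homology of the compacta $\LL'_\varepsilon$ have isomorphic pro-systems, and these converge to $\check H(\LL)$. This requires $\LL$ to be nice enough (e.g.\ an ENR, as a semialgebraic set if the $C_i$ are polytopes) so that \v{C}ech and singular homology agree. A nonzero class in $\tilde H_{n-1}(\LL)$ then survives in $\tilde H_{n-1}(\LL_\varepsilon)$ for small $\varepsilon$, using the retraction of a neighbourhood onto the ENR $\LL$.
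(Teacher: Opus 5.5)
Your proposal is correct, and the hypothesis of your main branch is met, so the fallback is not needed. In the proof of Theorem~\ref{t:compact} the nontriviality is witnessed by $g\colon S\to\LL$ and $f\colon\LL\to R$, $f(\ell)=\ell\cap F_0$, with $f\circ g$ a homeomorphism onto $R\cong S^{n-1}$. Your $f$ and $g$ are the paper's $g$ and $f$.

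Where you differ from the paper is in how this projection is extended to transversals of the inflated family. The paper does it explicitly, in the spirit of your ``formula'' option followed by a retraction. Using Lemma~\ref{l:close_transversal}, it shows (Lemma~\ref{l:beta_delta}) that for small $\delta$ every transversal to $\CC^\delta$ makes an angle at least $\beta$ with the hyperplane $F'_0\supseteq F_0$ and meets it inside $R^\eta$. Hence $\ell\mapsto\ell\cap F'_0$ is continuous on the whole inflated transversal space, and the paper composes it with the tubular-neighbourhood retraction $R^\eta\to R$ of Lemma~\ref{l:retract}.

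You instead use the neighbourhood extension property of the sphere, plus the fact that the compact sets $\LL'_\varepsilon$ decrease to $\LL$ and so eventually lie in any open neighbourhood of $\LL$. That last fact is the topological content of Lemma~\ref{l:close_transversal}. The extension can be made concrete: compose $f$ with $(r_1,r_1,r_1^2,\dots,r_n,r_n,r_n^2)\mapsto(r_1,\dots,r_n)$, Tietze-extend into $\R^n$, and project radially from $(1,\dots,1)$ on the open set where the extension avoids that point.

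Your route is shorter, needs no angle or distance estimates, and works for any continuous witness into an ANR. The price is that you work in the space of lines directly. You should record that the lines meeting a fixed closed ball of radius $\rho$ about the origin form a compact metrizable subspace of $\LL(\R^d)$:
\begin{itemize}
\item it is compact as the image of $\{(x,y)\colon |x|\le\rho,\ |y-x|=1\}$ under the quotient map;
\item it is Hausdorff, hence metrizable, since $\ell\mapsto$ (direction, point of $\ell$ nearest the origin) is a continuous injection into a metric space.
\end{itemize}
This is what makes Tietze and the nested-compacta argument available. The paper's pointwise arguments never need it, and in exchange its extension is explicit and geometric.

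Your fallback is also sound for this family. Since the $C_i$ are polytopes, $\LL$ is a compact semialgebraic set, hence a compact polyhedron and a neighbourhood retract of the space of lines. This gives the more general statement that $\tilde H_*(\LL)$ injects into the homology of the transversal space of every sufficiently small inflation. The \v{C}ech pro-system detour before it can be dropped.
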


\paragraph{Remarks.} The theorems above provide non-acyclic families only if
$n \geq 2$. The case $n=1$ is probably well known. However, we keep $n = 1$ in
the statement as it fits into a common framework and, in addition, the case $n =
1$ provides a partial motivation for the construction for $n \geq 2$.

One may be interested in a question what is the homotopy type of the spaces
$\LL$ we construct in order to prove Theorems~\ref{t:compact_intro}
and~\ref{t:inflated_intro}. We believe that in both cases, $\LL$ is homotopy
equivalent to $S^{n-1}$. However, in both cases, it is easier just to show
$\tilde H_{n-1}(\LL)\ne 0$. Without a direct application, we prefer a simpler
proof.

One may also be interested whether analogues of Theorems~\ref{t:compact_intro}
and~\ref{t:inflated_intro} hold for the space of $k$-transversals (that
is transversal $k$-flats). In fact, as discussed below Conjecture~4.5
of~\cite{cgh24}, for $k \geq 3$ the space of $k$-transversals in $\R^d$ to a
collection of disjoint open convex sets can have arbitrarily complicated
topology (provided that $k \leq d-3$). For $k = 2$, in the case of compact
sets, we can easily generalize the construction from Theorem~\ref{t:compact_intro}.
Indeed, we consider $\R^{3n}$ as a subspace of $\R^{3n+1}$, and we take $\CC$
from Theorem~\ref{t:compact_intro}. We obtain $\CC'$ by adding a point $p \in
\R^{3n+1}\setminus \R^{3n}$ to $\CC$. Every transversal plane to $\CC'$ meets
$\R^{3n}$ in a line transversal to $\CC$ and vice versa, every transversal line
to $\CC$ can be uniquely extended to a plane transversal to $\CC'$. It follows
that the space of transversal lines to $\CC$ is homeomorphic to the space
of transversal planes to $\CC'$. We believe that this construction can be
`inflated' to the setting of collections of disjoint open convex set in a
similar way how we deduce Theorem~\ref{t:inflated_intro} from
Theorem~\ref{t:compact_intro}. However, we leave out the details.

\paragraph{Motivation and background.}
Line transversals (or in general $k$-transversals) have been intensively 
studied mainly in the context of Helly type results. For example, the famous
Hadwiger's transversal theorem~\cite{hadwiger57} states that a family of
convex sets in a plane admits a line transversal if and only if every three
members of the family admit a line transversal with consistent order. This has
been further generalized by Goodman, Pollack and Wenger~\cite{gp88,pw90,wenger90} to transversals of
codimension 1. This is still an area of active research as demonstrated by a
recent breakthrough by McGinnis and Sadovek~\cite{ms26} generalizing the result
to arbitrary $k$-transversals.

For the space of line transversals (or $k$-transversals in general), as
explained in~\cite{cgh24}, the motivation to study its topology stems from
Helly-type results for sets of bounded topological complexity~\cite{cgg14,
montejano14, gpptw17,abg26}. In this context, our
Theorems~\ref{t:compact_intro} and Theorems~\ref{t:inflated_intro} should be
understood as negative results in the sense that the topology of the 
space of line transversals may be more complicated than expected. 

Borcea, Goaoc and Petitjean~\cite{bgp08} considered directions of (directed) line
transversals to a collection of disjoint balls in $\R^d$. In this case, they proved that
they from a convex subset of $S^{d-1}$, in particular, it is contractible.

\section{Proof of Theorem~\ref{t:compact_intro}}
\label{s:compact}

The aim of this section is to prove Theorem~\ref{t:compact_intro}.

\paragraph{The space of lines.}
First we clarify the topology on the space of transversals.
Let $d$ be a positive integer. By the \emph{space of lines} $\LL(\R^d)$ in
$\R^d$ we mean the quotient space obtained from the deleted product
$(\R^d)_\Delta := \{(x,y) \in \R^d \times \R^d \colon x \neq y\}$ by considering
$(x,y)$ and $(x', y')$ equivalent, if they span the same line. Similarly, given
a family $\FF$ of subsets of $\R^d$, we regard the space of transversals to
$\FF$
as a subspace of $\LL(\R^d)$ such that the class of a point $(x,y) \in
(\R^d)_\Delta$ belongs to this subspace if and only if the line spanned by $x$
and $y$ is transversal to $\FF$. We will need the following observation.

\begin{observation}
\label{o:project_to_F}
  Let $F$ be an affine flat in $\R^d$ and let $\AA \subseteq \LL(\R^d)$ be a
  subspace such that each line in $\AA$ intersects $F$ in exactly one point. Then
  the map $f \colon \AA \to F$ given by $\ell \to \ell \cap F$ is continuous.
\end{observation}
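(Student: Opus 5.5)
We will prove continuity by lifting to the deleted product and using the universal property of the quotient topology. Let $q\colon (\R^d)_\Delta\to\LL(\R^d)$ be the quotient map and put $\tilde\AA := q^{-1}(\AA)$. The first step is to check that the restriction $q|_{\tilde\AA}\colon\tilde\AA\to\AA$ is again a quotient map. Once this holds, it suffices to show that the composition $g := f\circ q|_{\tilde\AA}$, given by $(x,y)\mapsto \ell(x,y)\cap F$, is continuous on $\tilde\AA$.

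\emph{Why the restriction is a quotient map.} Restrictions of quotient maps to saturated subsets are not automatically quotient maps, but here $q$ is open. Indeed, if $U\subseteq(\R^d)_\Delta$ is open, then $q^{-1}(q(U))$ is the union of the sets
\[
\{(a+s(b-a),\,a+t(b-a)) : s\ne t\}\cdot U ,
\]
that is, the images of $U$ under the maps $(x,y)\mapsto((1-s)x+sy,\,(1-t)x+ty)$ for $s\neq t$. Each such map is a homeomorphism of $(\R^d)_\Delta$, because it is an invertible linear map on $\R^d\times\R^d$ preserving the diagonal. Hence $q^{-1}(q(U))$ is open, so $q$ is open. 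The restriction of an open quotient map to a saturated subspace is open and surjective onto its image, hence a quotient map.

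\emph{Continuity of $g$.} Write $F=\{z : Mz=c\}$ for a suitable matrix $M$ and vector $c$. A point of $\ell(x,y)$ has the form $x+\lambda(y-x)$, and it lies in $F$ iff
\[
\lambda\, M(y-x)=c-Mx .
\]
Uniqueness of the intersection point for lines in $\AA$ means that $M(y-x)\neq0$ on $\tilde\AA$: if $M(y-x)=0$, the line would either miss $F$ or lie inside it, and a line contained in $F$ meets $F$ in more than one point. The system has a solution by hypothesis, so $c-Mx$ is a multiple of the nonzero vector $M(y-x)$. Therefore
\[
\lambda=\frac{\langle M(y-x),\,c-Mx\rangle}{\|M(y-x)\|^2},
\]
which is a continuous function of $(x,y)$ on $\tilde\AA$. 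Consequently $g(x,y)=x+\lambda(y-x)$ is continuous.

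The only step needing any care is the quotient-restriction issue, and it is handled by the openness of $q$. The remaining steps are routine.
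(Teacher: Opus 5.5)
Your proposal is correct and follows the same route as the paper: lift to the preimage of $\AA$ in the deleted product $(\R^d)_\Delta$, observe that the intersection map there is continuous and constant on fibers, and descend via the universal property of the quotient. Your version is more complete in two respects. You verify continuity of the lifted map by an explicit formula. You also show, via openness of $q$, that the restriction of $q$ to the saturated set $q^{-1}(\AA)$ is still a quotient map onto $\AA$ with its subspace topology, which the paper uses only implicitly.
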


\begin{proof}
  Let $P \subseteq (\R^d)_\Delta$ be the preimage of $\AA$ under the quotient
  map $(\R^d)_\Delta \to \LL(\R^d)$. The map $g \colon P \to F$ that maps
  $(x,y) \in P$ to the intersection of $F$ and the line spanned by $x$ and $y$
  is continuous. In addition, if $(x,y)$ and $(x',y')$ in $P$ define the same
  line, then $g(x,y) = g(x',y')$. Thus, by the universal property of quotient
  spaces, $g$ is continuous.
\end{proof}

\paragraph{Proof of Theorem~\ref{t:compact_intro} for $n=1$.}
Now we prove Theorem~\ref{t:compact_intro} for $n=1$. This proof serves as a
motivation for what we want to do for general $n$. In addition, we introduce a
lemma that will be reused for general $n$.

From now on, let $P = \{(x,y,xy) \colon x,y \in \R\} \subseteq \R^3$. This is a
doubly ruled hyperbolic paraboloid. It contains lines $\ell_r := \{(t,r,tr)\colon t
\in \R\}$ for $r \in \R$ and $m_r := \{(r, t, tr)\colon t  \in \R\}$ for $r \in
\R$ and no other lines. It is easy to check that $\ell_r \cap m_{r'} = (r',r,rr')$ for $r,r' \in
R$ and that $\ell_r \cap \ell_{r'} = m_r \cap m_{r'} = \emptyset$ for
distinct $r, r' \in \R$.

\begin{lemma}
\label{l:3lines}
  Let $\ell$ be a line in $\R^3$ and $a,b,c$ be
  three distinct real numbers. Then $\ell$ is transversal to $\ell_a$,
  $\ell_b$ and $\ell_c$ if and only if $\ell  = m_r$ for some $r \in \R$; see
  Figure~\ref{f:paraboloid}.
\end{lemma}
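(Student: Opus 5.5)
The "if" direction is immediate from the intersection formula recorded above: $m_r$ meets $\ell_a$ at $(r,a,ar)$, and likewise meets $\ell_b$ and $\ell_c$. So $m_r$ is transversal to $\ell_a,\ell_b,\ell_c$ for every $r$.

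For the converse, let $\ell$ meet $\ell_a,\ell_b,\ell_c$. First, $\ell$ cannot be one of $\ell_a,\ell_b,\ell_c$. For instance, if $\ell=\ell_a$, then $\ell$ would have to meet $\ell_b$, but distinct lines $\ell_r$ are pairwise disjoint.

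Hence $\ell$ meets the three lines in points $p_a\in\ell_a$, $p_b\in\ell_b$, $p_c\in\ell_c$. These points are pairwise distinct, since the lines $\ell_a,\ell_b,\ell_c$ are pairwise disjoint. All three points lie on $P$, whose defining equation $z=xy$ is quadratic. I will show that a line meeting a quadric in three distinct points lies in it. Parametrize $\ell$ as $p+tv$ with $p=(p_1,p_2,p_3)$ and $v=(v_1,v_2,v_3)$. Then
\[
q(t) := (p_3+tv_3)-(p_1+tv_1)(p_2+tv_2)
\]
is a polynomial of degree at most $2$ with three distinct roots. It therefore vanishes identically, and so $\ell\subseteq P$.

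By the stated classification, the only lines contained in $P$ are the $\ell_r$ and the $m_r$. It remains to exclude $\ell=\ell_r$. If $r\in\{a,b,c\}$ this was ruled out above. If $r\notin\{a,b,c\}$, then $\ell_r$ is disjoint from $\ell_a$, so it is not transversal. Hence $\ell=m_r$ for some $r$.

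The only place needing care is the classification of the lines on $P$, which the paper asserts without proof. If it must be justified, I would argue as follows. A line $p+tv$ lies in $P$ iff $q\equiv 0$. The $t^2$ coefficient gives $v_1v_2=0$. If $v_1=0$, then $v_2\ne 0$ (otherwise $v=(0,0,v_3)$ with $v_3\neq 0$, and the $t$ coefficient $v_3-p_1v_2-p_2v_1=v_3$ cannot vanish). With $v_1=0$ the first coordinate is constant, $x\equiv p_1$, which forces the line to be $m_{p_1}$. The case $v_2=0$ symmetrically yields $\ell_{p_2}$.
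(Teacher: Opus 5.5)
Your proof is correct and follows the paper's argument. The ``if'' direction uses $m_r\cap\ell_a=(r,a,ar)$; for the converse, $\ell$ meets the quadric $P$ in three distinct points, so it lies in $P$, and it cannot be any $\ell_r$ because the $\ell_r$ are pairwise disjoint. You also supply, correctly, details the paper leaves implicit: the degree-two polynomial argument and the classification of the lines on $P$.
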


\begin{figure}
  \centering
    \includegraphics{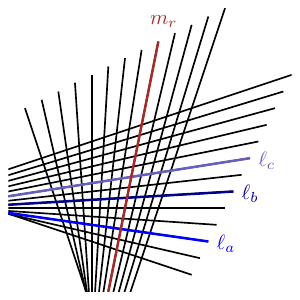}
    \caption{The line transversals to $\ell_a$, $\ell_b$ and $\ell_c$.}
    \label{f:paraboloid}      
\end{figure}

\begin{proof}
  If $\ell  = m_r$, then it meets $\ell_a, \ell_b$ and $\ell_c$ by
  considerations above the statement of the lemma.

   On the other hand, assume that $\ell$ is transversal to $\ell_a$, $\ell_b$
   and $\ell_c$. The lines $\ell_a, \ell_b$ and $\ell_c$ lie in $P$, and they are pairwise
   disjoint, thus $\ell$ meets $P$ in at least three points. Because $P$ is a
   quadric, this means that $\ell$ lies in $P$. The line $\ell$ cannot be any
   of $\ell_r$, thus it must be $m_r$ for some $r \in R$.
\end{proof}

With the lemma above, it is easy to construct an example of a family $\CC$
proving Theorem~\ref{t:compact_intro} for $n=1$. Indeed, we start with three
disjoint lines on $P$, for example $\ell_0$, $\ell_1$ and $\ell_2$. We pick two
points $p,q$ on $P$ avoiding $\ell_0$, $\ell_1$ and $\ell_2$ so that the line
$\ell$ determined by $p$ and $q$ does not belong to $P$, that is, it intersects
$P$ only in $p$ and $q$. For example, we can pick $p = (0,3,0)$ and $q =
(1,4,4)$. Then, by Lemma~\ref{l:3lines}, the transversal lines to $\ell_0$,
$\ell_1$, $\ell_2$ and $\ell$ are those lines $m_r$ which pass through $\ell$.
As $m_r$ is full contained in $P$, there are exactly two such lines, one
passing through $p$ and another one through $q$. For our specific choices,
these are lines $m_0$ and $m_1$. Thus the space of line transversals to
$\ell_0$, $\ell_1$, $\ell_2$ and $\ell$ is a 2-point discrete set. In this
example $\ell_0$, $\ell_1$, $\ell_2$ and $\ell$ are not compact convex sets but
we can restrict those lines to segments intersecting $m_0$ and $m_1$, and we are
done.

\paragraph{Construction for Theorem~\ref{t:compact_intro} for general $n$.}
Now we want to generalize the construction above to any $n \geq 2$. Let us
first sketch an idea. Our original approach was to generalize $P$ to some
algebraic variety $V$ of codimension $1$ such that it would contain some affine
flats $F_a$ playing the role of $\ell_a$ in the previous example so that the line transversals would belong to $V$. Then we could take one more flat $F$ (playing
the role of $\ell$ in the previous proof) and intersect it with $V$ in
something topologically nontrivial (hopefully $S^{n-1}$). Then the space of the
line transversals to three (or a finite number) of such flats $F_a$ and to $F$
would be the topology of $S^{n-1}$ essentially by
Observation~\ref{o:project_to_F}. A difficulty with this approach is that the
examples of $V$ we tried turned out to be too flat in the sense that we were
not able to intersect $V$ with a flat (or convex set in general) in anything
topologically more complicated than $S^0$.

We managed to circumvent this difficulty by considering $V$ of higher
codimension. In the actual proof we do not need to work with such a $V$
explicitly. But $V$ we implicitly use is $P^n \subseteq (\R^3)^n$.

  Let $n$ be a positive integer. Given $a \in \R$ and $i \in [n]$, by $F^i_a$
  we denote the flat $\R^{3(i-1)}\times \ell_a \times\R^{3(n-i)} \subseteq
  \R^{3n}$.\footnote{Formally speaking, $F^i_a$ also depends on $n$. However, we
  will always assume that $n$ is known from the context.} We also define 
  \begin{equation}
    \label{e:F0}
    F_0:=\left\{(x_1,x_1,z_1,x_2, x_2, z_2, \dots, x_n,x_n,z_n) \in \R^{3n}
    \colon \sum_{i=1}^n (z_i - 2x_i + 1)= \frac12\right\}.
  \end{equation}

\begin{lemma}\label{l:general}
  Let $n$ be a positive integer and let $(a_1^1,a_2^1,a_3^1,\dots,a_1^n,a_2^n,a_3^n)$ be a $3n$-tuple of real
   numbers such that $a_1^i$, $a_2^i$ and $a_3^i$ are pairwise distinct for every $i \in
   [n]$. 

  Then all transversal lines to $\FF :=\{F_0\}\cup \{F^i_{a_j^i}\colon
  i\in [n],j\in [3] \}$ are of the form
\[
\{(r_1,r_1,r_1^2,\dots,r_i,r_i,r_i^2,\dots,r_n,r_n,r_n^2)+(0,q_1 t,q_1 r_1
  t,\dots,0,q_i t,q_i r_i t,\dots ,0,q_n t,r_n q_n t)\}
\]
where $r_1,\dots,r_n\in \R$ are such that $\sum_{i=1}^n(r_i-1)^2=\frac{1}{2}$ and
  $q_i\in \R\setminus{\{0\}}$. 
\end{lemma}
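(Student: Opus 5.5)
The plan is to reduce everything to the $n=1$ situation by projecting onto the $n$ blocks of coordinates and applying Lemma~\ref{l:3lines} blockwise. Write a line $\ell\subseteq\R^{3n}$ as $\ell=\{u+tv\colon t\in\R\}$ with $v\neq 0$. For $i\in[n]$ let $\pi_i\colon\R^{3n}\to\R^3$ be the projection onto coordinates $3i-2,3i-1,3i$. Since $F^i_a=\pi_i^{-1}(\ell_a)$, the line $\ell$ meets $F^i_a$ if and only if $\pi_i(\ell)$ meets $\ell_a$. Here $\pi_i(\ell)=\{\pi_i(u)+t\pi_i(v)\}$ is either a line (if $\pi_i(v)\neq 0$) or a single point (if $\pi_i(v)=0$).

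First I rule out the degenerate case. If $\pi_i(v)=0$, then the single point $\pi_i(u)$ would have to lie on each of $\ell_{a^i_1}$, $\ell_{a^i_2}$ and $\ell_{a^i_3}$. This is impossible, because these lines are pairwise disjoint, as noted above Lemma~\ref{l:3lines}. Hence $\pi_i(\ell)$ is a line meeting $\ell_{a^i_1},\ell_{a^i_2},\ell_{a^i_3}$. By Lemma~\ref{l:3lines}, $\pi_i(\ell)=m_{r_i}$ for some $r_i\in\R$. Since $\pi_i(\ell)$ is parametrized affinely by $t$ and $m_{r_i}=\{(r_i,s,sr_i)\}$, there are $s_i\in\R$ and $q_i\neq 0$ with
\[
\pi_i(u+tv)=(r_i,\ s_i+q_it,\ (s_i+q_it)r_i)\qquad\text{for all } t\in\R.
\]
The first coordinate is constant, and the second varies with nonzero speed $q_i$ because $\pi_i(v)\neq0$.

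Next I use that $\ell$ must also meet $F_0$. Let $t_0$ be a parameter with $u+t_0v\in F_0$. The defining conditions of $F_0$ in block $i$ say that the first two coordinates agree, so $r_i=s_i+q_it_0$ for every $i$. The third coordinate in block $i$ then equals $r_i\cdot r_i=r_i^2$. The linear condition in~\eqref{e:F0} becomes
\[
\sum_{i=1}^n (r_i^2-2r_i+1)=\sum_{i=1}^n (r_i-1)^2=\tfrac12 .
\]
Finally I reparametrize by $t\mapsto t+t_0$, so that the intersection with $F_0$ occurs at $t=0$. This replaces $s_i$ by $s_i+q_it_0=r_i$ for all $i$ simultaneously. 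Substituting into the displayed formula for $\pi_i$ gives exactly the claimed form, with $\pi_i(u)=(r_i,r_i,r_i^2)$ and $\pi_i(v)=(0,q_i,q_ir_i)$. If desired, one can check the converse directly: such a line passes through $(r_1,r_1,r_1^2,\dots)\in F_0$, and its $i$th projection is $m_{r_i}$, which meets every $\ell_a$.

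The only step needing real care is the first one. We must exclude point projections and make sure the blockwise parametrizations all use the same parameter $t$. This is what allows a single shift $t_0$ to normalize every block at once. Everything else is a direct substitution.
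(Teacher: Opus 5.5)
Your proof is correct and follows essentially the same route as the paper: you project blockwise, apply Lemma~\ref{l:3lines} to get $\pi_i(\ell)=m_{r_i}$, and then use the intersection with $F_0$ to fix the base point $(r_1,r_1,r_1^2,\dots,r_n,r_n,r_n^2)$ and obtain the condition $\sum_i (r_i-1)^2=\frac12$. The differences are only in presentation: you work with a single common parametrization instead of intersecting the $n$-flat $\rho=\prod_i m_{r_i}$ with $F_0$, and you get $q_i\neq 0$ at the start (from $\pi_i(v)\neq 0$) rather than at the end.
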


\begin{proof}
  Assume that $\ell$ is a line transversal to $\FF$. Given $i \in [n]$,
  let $\pi_i \colon \R^{3n} \to \R^3$ be the projection to the $i$th triplet of coordinates in
  $\R^{3n}$.
  We get that $\pi_i(\ell)$ is transversal to
  $\pi_i(F^i_{a_j^i})=\ell_{a_j^i}$ for $i \in [n]$ and $j \in [3]$. In
  particular, $\pi_i(\ell)$ must be a line.
  By Lemma~\ref{l:3lines}, $\pi_i(\ell)=m_{r_i}$ for some $r_i\in \R$.
  Therefore, $\ell$ belongs to the $n$-flat
\[
 \rho:= \prod_{i=1}^n
  m_{r_i}=\{(r_1,t_1,r_1t_1,\dots,r_i,t_i,r_it_i,\dots,r_n,t_n,r_nt_n)\colon
  t_1,\dots, t_n \in \R\}\subseteq
  \R^{3n}.
\]
  
The only intersection of $\rho$ and $F_0$ is the point
  $(r_1,r_1,r_1^2,\dots,r_i,r_i,r_i^2,\dots,r_n,r_n,r_n^2)$, and this occurs only
  if $\sum_{i=1}^n (r_i-1)^2=\frac{1}{2}$ due to the condition in the
  definition of $F_0$. Hence, $\ell$ has to pass through the point
  $(r_1,r_1,r_1^2,\dots,r_i,r_i,r_i^2,\dots,r_n,r_n,r_n^2)$, and since
  $\ell\subseteq\rho$, 
  \[ \ell=
  \{(r_1,r_1,r_1^2,\dots,r_i,r_i,r_i^2,\dots,r_n,r_n,r_n^2)+(0,q_1 t,q_1 r_1
  t,\dots,0,q_i t,q_i r_i t,\dots,0,q_n t,r_n q_n t)\} \] 
  for some $q_1, \dots,
  q_n \in \R$. Moreover, if $q_i=0$ for some $i \in [n]$, then $\ell$ cannot be 
  transversal to $F^i_{a_1^i}$, $F^i_{a_2^i}$ and
  $F^i_{a_3^i}$ simultaneously. Thus all $q_i$ are non-zero. 
\end{proof}

\begin{lemma}\label{l:disjoint_sets_general}
  For each $i\in[n]$, let $G_i$ be the halfspace
  $\{(x_1,y_1,z_1,\dots,x_n,y_n,z_n)\in  \R^{3n}\colon y_i\ge 6\}$ and let $Q
  := [0,8^{n+1}]^{3n}$.
  For each $i\in [n]$ and $a\in \{3,4,5\}$, let  $C^a_i := F_i^a \cap
  \bigcap_{j=1}^{i-1} G_j \cap Q $ and

\[
  C_0:=\left\{(x_1,x_1,z_1,x_2,x_2,z_2,\dots,x_n,x_n,z_n) \in Q\colon x_i\leq 1.9\hbox{
    for $i \in [n]$ and }\sum_{i=1}^n( z_i - 2 x_i + 1) = \frac12\right\}
\]

  Then $\CC=\{C_0\}\cup \{C_i^a\colon i\in [n],a\in \{3,4,5\}\}$ is a family of
  pairwise disjoint compact convex sets, and the set of lines transversal to $\CC$ contains the lines
  \[
    \{(r_1,r_1,r_1^2,r_2,r_2,r_2^2,\dots,r_n,r_n,r_n^2)+(0,8^{n-1}t,8^{n-1}r_1
    t, 0,8^{n-2}t,8^{n-2}r_2 t,\dots,0,t,r_n t)\colon t \in \R\}
    \]
where $r_1,\dots,r_n\in \R$ are such that $\sum_{i=1}^n(r_i-1)^2=\frac{1}{2}$.
\end{lemma}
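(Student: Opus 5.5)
The plan is to check the two assertions separately: first that $\CC$ consists of pairwise disjoint compact convex sets, then that each listed line meets every member of $\CC$. Throughout I read $F_i^a$ as $F^i_a$.

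\textbf{Compactness and convexity.} Each $C^a_i$ is an intersection of an affine flat, closed halfspaces and the cube $Q$. Likewise $C_0$ is the intersection of $Q$ with the linear subspace $\{y_i = x_i\}$, the closed halfspaces $\{x_i \le 1.9\}$ and the hyperplane $\sum_i (z_i - 2x_i + 1) = \frac12$. So all members of $\CC$ are closed and convex, and they are bounded because they lie in $Q$.

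\textbf{Disjointness.} Write $(x_i,y_i,z_i)$ for the $i$th triplet of coordinates. There are three cases.
\begin{enumerate}[(i)]
\item $C^a_i$ and $C^b_i$ with $a\neq b$: the projections to the $i$th triplet lie on $\ell_a$ and $\ell_b$. These lines are disjoint.
\item $C^a_i$ and $C^b_j$ with $i<j$: every point of $C^b_j$ lies in $G_i$, so $y_i \ge 6$. Every point of $C^a_i$ has $i$th triplet on $\ell_a$, so $y_i = a \le 5$.
\item $C_0$ and $C^a_i$: points of $C_0$ satisfy $y_i = x_i \le 1.9$, while points of $C^a_i$ satisfy $y_i = a \ge 3$.
\end{enumerate}

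\textbf{Transversality.} Fix $r_1,\dots,r_n$ with $\sum_i (r_i-1)^2 = \frac12$. Then $|r_i - 1| \le 1/\sqrt2 < 0.71$, so $r_i \in [0.29, 1.71]$. The point of the line at parameter $t$ has $i$th triplet
\[
\bigl(r_i,\ r_i + 8^{n-i}t,\ r_i(r_i + 8^{n-i}t)\bigr).
\]

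For $C_0$, take $t=0$. This gives the point with triplets $(r_i, r_i, r_i^2)$. Here $x_i = r_i \le 1.9$, and
\[
\sum_i \bigl(r_i^2 - 2r_i + 1\bigr) = \sum_i (r_i-1)^2 = \tfrac12 .
\]
All coordinates lie in $[0,3] \subseteq [0,8^{n+1}]$, so the point is in $Q$.

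For $C^a_i$ with $a\in\{3,4,5\}$, take $t = (a - r_i)/8^{n-i}$. Then the $i$th triplet is $(r_i, a, a r_i) \in \ell_a$, so the point lies in $F^i_a$. It remains to check membership in $G_j$ for $j < i$ and in $Q$.
\begin{itemize}
\item For $j<i$ we get $y_j = r_j + 8^{i-j}(a - r_i)$. Since $i-j\ge 1$ and $a - r_i \ge 3 - 1.71 > 1.2$, this gives $y_j \ge 0.29 + 8\cdot 1.2 > 6$, so the point is in $G_j$.
\item For $j > i$ we get $y_j = r_j + 8^{-(j-i)}(a - r_i)$, which is positive.
\item In all cases $0 \le y_j \le 1.71 + 5\cdot 8^{n-1}$.
\item Since $z_j = r_j y_j$ and $0 < r_j < 2$, we get $0 \le z_j \le 2\bigl(1.71 + 5\cdot 8^{n-1}\bigr) < 8^{n+1}$.
\item The coordinates $x_j = r_j$ lie in $[0,2]$.
\end{itemize}
Hence the point is in $Q$, and therefore in $C^a_i$.

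\textbf{Main obstacle.} The only delicate step is the numerical bookkeeping in the last paragraph. The scaling factors $8^{n-j}$ must push $y_j$ above $6$ for all $j<i$ while keeping every coordinate inside $Q$. The bound $r_i\in[0.29,1.71]$ makes this go through.
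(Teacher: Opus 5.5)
Your proof is correct and follows essentially the same route as the paper. It gets convexity and compactness from writing each set as an intersection of closed halfspaces and flats inside $Q$, disjointness by comparing a suitable $y$-coordinate, and transversality by plugging in $t=0$ and $t=(a-r_i)/8^{n-i}$ with the same coordinate bookkeeping; the only cosmetic differences are that you separate $C_0$ from $C^a_i$ using $y_i$ rather than $y_1$, and that you use the sharper range $r_i\in[0.29,1.71]$ where the paper is content with $[0,2]$.
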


\begin{proof}
  Each set in $\CC$ is an intersection of half-spaces, therefore convex. In
  addition, each such set is a closed set contained in $Q$, therefore compact.
  By
  checking the second coordinate, we get that $C_0$ is disjoint from $C_i^a$
  for every $i \in [n]$ and $a \in \{3,4,5\}$. Indeed, the second coordinate of
  every point in $C_0$ is at most $1.9$ while it is $a$ if $i = 1$ and at least
  $6$ if $i \geq 2$ as $C_i^a \subseteq G_1$ for $i \geq 2$. Similarly, we
  check that $C_i^a$ and $C_j^b$ are disjoint if $i < j$, or $i =j$ and $a \neq
  b$. Indeed, the coordinate corresponding to $y_i$ (in the expression for
  $G_i$) of every point in $C_i^a$ is equal $a$ because $C_i^a \subseteq
  F_i^a$. On the other hand, for every point in $C_j^b$, this coordinate equals
  $b$ if $i = j$, and it is at least $6$ if $j > i$ as $C_j^b \subseteq G_i$ in the
  latter case.
  
  Now we consider a line 
  \[
    \ell := \{(r_1,r_1,r_1^2,r_2,r_2,r_2^2,\dots,r_n,r_n,r_n^2)+(0,8^{n-1}t,8^{n-1}r_1
    t, 0,8^{n-2}t,8^{n-2}r_2 t,\dots,0,t,r_n t)\colon t \in \R\}
    \]
where $r_1,\dots,r_n\in \R$ are such that $\sum_{i=1}^n(r_i-1)^2=\frac{1}{2}$.
  This, in particular, implies that $r_1, \dots, r_n \in [1 - \sqrt 2/2, 1 +
  \sqrt2 /2]$.
We need to check that $\ell$ is indeed transversal to $\CC$.

The point $(r_1,r_1,r_1^2,\dots,r_n,r_n,r_n^2)$ belongs to $\ell$. It is also
  easy to check that it belongs to $C_0$. Indeed, we need that $0 \leq 
  r_i \leq 1.9$ and $0 \leq r_i^2 \leq 8^{n+1}$ for
  every $i \in [n]$, and that $\sum_{i=1}^n (r_i^2 -2r_i + 1) = \frac12$.
  All these conditions follow from the aforementioned properties of $r_i$.

  Now for $i\in [n]$ and $a \in \{3,4,5\}$ we set $t=\frac{a-r_i}{8^{n-i}}$
  into the definition of $\ell$. We get that the point
 \[
  (r_1,r_1+8^{i-1}(a-r_i),r_1^2+8^{i-1}r_1(a-r_i),\dots,r_i,a,ar_i,\dots,r_n,r_n+8^{i-n}(a-r_i),r_n^2+8^{i-n}r_n(a-r_i))
  \]
belongs to $\ell$. By the definition of $F_i^a$, this point belongs to $F_i^a$. 
In addition, for $j \in [i-1]$, $r_j+8^{i-j}(a-r_i)\ge 8(a-r_i)\ge 6$ as
  $r_i,r_j\in [1-\frac{\sqrt{2}}{2},1+\frac{\sqrt{2}}{2}]\subseteq [0,2]$ and
  $a \in [3,5]$. Thus the aforementioned point also belongs to $G_j$.
  Similarly, it is easy to check that each coordinate is positive and
  upper bounded by $4 + 8^{n-1}\cdot2\cdot 5 \leq 8^{n+1}$.
  Consequently, the point belongs to $C_i^a$. Therefore, $\ell$ intersects $C_i^a$. 
\end{proof}

The following theorem immediately implies Theorem~\ref{t:compact_intro}.

\begin{theorem}
\label{t:compact}
  Let $\CC$ be the family of convex set in $\R^{3n}$ from
  Lemma~\ref{l:disjoint_sets_general}. Let $\LL$ be the space of
 lines transversal to $\CC$. Then $H_{n-1}(\LL)\ne 0$.
\end{theorem}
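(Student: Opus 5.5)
The plan is to exhibit a retraction of $\LL$ onto a sphere. Let $S := \{(r_1,\dots,r_n)\in\R^n \colon \sum_{i=1}^n (r_i-1)^2 = \frac12\}$, which is homeomorphic to $S^{n-1}$. By Lemma~\ref{l:disjoint_sets_general}, there is a map $s\colon S\to\LL$ sending $(r_1,\dots,r_n)$ to the line with direction $(0,8^{n-1},8^{n-1}r_1,\dots,0,1,r_n)$ through $(r_1,r_1,r_1^2,\dots,r_n,r_n,r_n^2)$. It is continuous, since it lifts to a continuous map into $(\R^{3n})_\Delta$ (take the base point and base point plus direction) and then composes with the quotient map. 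In the other direction, we want a continuous $f\colon \LL\to S$ with $f\circ s=\mathrm{id}_S$. Then $\tilde H_{n-1}(S)\cong R$ is a direct summand of $\tilde H_{n-1}(\LL)$ (or of $H_{n-1}$, which equals it for $n\ge 2$; for $n=1$ one uses $H_0$ of a two-point-retracting space, which is likewise nonzero). This gives the theorem.

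To construct $f$, first note that every set in $\CC$ is contained in the corresponding member of the family $\FF$ of Lemma~\ref{l:general}, with $a^i_j\in\{3,4,5\}$. Indeed $C_i^a\subseteq F_i^a$ and $C_0\subseteq F_0$. So every line $\ell\in\LL$ is transversal to $\FF$, and Lemma~\ref{l:general} applies. Each such $\ell$ has the stated form, and in particular meets $F_0$ in exactly one point, namely $p(\ell)=(r_1,r_1,r_1^2,\dots,r_n,r_n,r_n^2)$ with $(r_1,\dots,r_n)\in S$.

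Uniqueness of the intersection point needs a short check. The direction of $\ell$ has first coordinates $0$ and nonzero $q_i$ in the second coordinates of each triple. A direction parallel to $F_0$ must have equal first and second coordinates in each triple, which would force $q_i=0$. So $\ell$ is not parallel to $F_0$ and meets it at most once.

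Now Observation~\ref{o:project_to_F}, applied with $F=F_0$ and $\AA=\LL$, shows that $\ell\mapsto \ell\cap F_0$ is continuous. Composing with the continuous projection $F_0\to\R^n$ onto the coordinates $x_1,\dots,x_n$ gives a continuous map $f\colon\LL\to S$, and by Lemma~\ref{l:general} its image lies in $S$. Finally, $s(r)$ passes through $(r_1,r_1,r_1^2,\dots)$, which lies in $F_0$, so $f(s(r))=r$.

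The only delicate step is the applicability of Observation~\ref{o:project_to_F}, namely that every transversal line meets $F_0$ in exactly one point. This is handled by the nonvanishing of the $q_i$ from Lemma~\ref{l:general}. Everything else is formal functoriality of homology.
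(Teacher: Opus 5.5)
Your proposal is correct and follows essentially the paper's argument: a continuous section $S\to\LL$ from Lemma~\ref{l:disjoint_sets_general}, composed with the map $\ell\mapsto\ell\cap F_0$, which is continuous by Observation~\ref{o:project_to_F} because Lemma~\ref{l:general} forces a unique intersection point. The only cosmetic difference is that you project onto the $x$-coordinates to get a genuine retraction onto $S$. The paper instead maps into $R\subseteq F_0$ and uses compactness to see that $f\circ g\colon S\to R$ is a homeomorphism; you also spell out the non-parallelism check that the paper only asserts.
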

\begin{proof}
  Recall that $\CC = \{C_0\}\cup \{C_i^a\colon i\in [n],a\in \{3,4,5\}\}$ as in
  the statement of Lemma~\ref{l:disjoint_sets_general} and that $C_0
  \subseteq F_0$, where 
  \[
    F_0 = \left\{(x_1,x_1,z_1,x_2, x_2, z_2, \dots, x_n,x_n,z_n) \in \R^{3n}
    \colon \sum_{i=1}^n (z_i - 2x_i + 1)= \frac12\right\}
 \]
 as in~\eqref{e:F0}. In addition, it follows immediately from the definition of
  $C_i^a$ that $C_i^a\subseteq F_i^a$ whenever $i\in
  [n]$ and $a\in\{3,4,5\}$.

  Consequently, Lemma~\ref{l:general} implies that $\LL$ is a subset of

  \begin{multline}
 \label{e:L0}
  \LL_0 :=
  \bigg\{\big\{(r_1,r_1,r_1^2,\dots,r_n,r_n,r_n^2) +
  (0,q_1t,r_1q_1t,\dots,0,q_n t,r_n q_n t)\colon t \in
  \R\big\}\colon \\
    r_1, \dots, r_n \in \R \hbox{ are such that }
  \sum_{i=1}^{n}(r_i-1)^2=\frac{1}{2} \hbox{ and } q_1, \dots, q_n \in \R \setminus \{0\}\bigg\}.
  \end{multline}

Note that $\ell\in \LL_0$ meets $F_0$ exactly in one point, namely in the point
  $(r_1, r_1, r_1^2, \dots, r_n, r_n, r_n^2)$ for some $r_1, \dots, r_n \in \R$
  such that $\sum_{i=1}^{n}(r_i-1)^2=\frac{1}{2}$. Thus the map $f\colon \LL_0
  \to F_0$ given by $f(\ell):=\ell \cap F_0$ is continuous by
  Observation~\ref{o:project_to_F}. 

    In addition, $f(\LL_0)\subseteq R$ where
  \begin{equation}
   \label{e:R}
R:=\{(r_1,r_1,r_1^2,\dots,r_n,r_n,r_n^2)\colon\sum_{i=1}^n
  (r_i-1)^2=\frac{1}{2}\}\subseteq F_0.
  \end{equation}
    
    Next, let $S$ be the $(n-1)$-sphere 
\[
  S=\Big\{(r_1,\dots,r_n)\in \R^n\colon \sum_{i=1}^n (r_i-1)^2=\frac{1}{2}\Big\}.
\]
Let $g$ be the continuous map from $S$ to $\LL$ given by 
\[
g((r_1,\dots,r_n))=\{(r_1,r_1,r_1^2,r_2,r_2,r_2^2,\dots,r_n,r_n,r_n^2)+(0,8^{n-1}t,8^{n-1}r_1
  t,0,8^{n-2}t,8^{n-2}r_2 t,\dots,0,t,r_n t)\}.
\]
  Note that it follows from Lemma~\ref{l:disjoint_sets_general}, that the
  image of $g$ is indeed contained in $\LL$. Then $h:=f\circ g$ maps $(r_1,
  \dots, r_n)$ in $S$ to $(r_1, r_1, r_1^2, \dots, r_n, r_n, r_n^2) \in R$.
  Thus $h$ is a bijection between $S$ and $R$, and because $S$ is compact, $h$
  is a homeomorphism. From $f$ and $g$ we get induced maps on the homology
  $H_{n-1}(S)\to H_{n-1}(\LL) \to H_{n-1}(R)$. Then, $H_{n-1}(\LL)\ne
  0$ as $H_{n-1}(S)\cong H_{n-1}(R)\ne 0$ and the composition of the two
  induced maps is the identity.  
\end{proof}

\section{Inflating the sets---proof of Theorem~\ref{t:inflated_intro}}

Given a set $X \subseteq \R^d$ and $\delta > 0$, by $X^\delta$ we mean the open
$\delta$-neighborhood of $X$, that is, the set of points in distance less than
$\delta$ from a point of $X$.

\begin{lemma}
\label{l:close_transversal}
  Let $\KK = \{K_1, \dots, K_t\}$ be a family of pairwise disjoint compact convex sets in $\R^d$. 
Then for every $\varepsilon > 0$ there exists $\delta > 0$ such that for every
  transversal line $\ell_\delta$ to $\KK^\delta  := \{K_1^\delta, \dots,
  K_t^\delta\}$ there is a line $\ell$ transversal to $\KK$ such that for every
  $i \in [t]$, there are points $x \in \ell \cap K_i$ and $x_\delta \in
  \ell_\delta \cap K_i^\delta$ with $\dist(x, x_\delta) < \varepsilon$.
\end{lemma}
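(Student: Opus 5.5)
The plan is to argue by contradiction, using a compactness argument. Two features of the statement make this work: the points $x_\delta$ can be chosen to lie within a bounded region, and a limit of collinear configurations is again collinear.

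First I dispose of the case $t = 1$ directly. Given $\varepsilon > 0$, take $\delta := \varepsilon$. For a line $\ell_\delta$ meeting $K_1^\delta$, pick $x_\delta \in \ell_\delta \cap K_1^\delta$ and a point $x \in K_1$ with $\dist(x, x_\delta) < \delta = \varepsilon$. Let $\ell$ be the line through $x$ parallel to $\ell_\delta$; it is transversal to $\KK$ and satisfies the requirement. So from now on I assume $t \geq 2$.

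Suppose the statement fails for some $\varepsilon > 0$. Then for every $k \in \mathbb{N}$, taking $\delta = 1/k$, there is a line $\ell_k$ transversal to $\KK^{1/k}$ such that no line $\ell$ transversal to $\KK$ has the required property with respect to $\ell_k$.

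For each $i \in [t]$ pick $x_i^k \in \ell_k \cap K_i^{1/k}$. All these points lie in the $1$-neighborhood of the compact set $K_1 \cup \dots \cup K_t$, which is bounded. Passing to a subsequence, I get $x_i^k \to x_i$ for every $i \in [t]$. Since $\dist(x_i^k, K_i) < 1/k$ and $K_i$ is closed, $x_i \in K_i$. The $K_i$ are pairwise disjoint, so $x_1, \dots, x_t$ are pairwise distinct; in particular $x_1 \neq x_2$.

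Next I check that all $x_i$ lie on the line $\ell$ spanned by $x_1$ and $x_2$. Collinearity of a triple of points is a closed condition: for instance, $(x_j - x_1)$ and $(x_2 - x_1)$ are linearly dependent if and only if all $2\times 2$ minors of the matrix with these two columns vanish. For each $k$ the points $x_1^k, x_2^k, x_j^k$ lie on $\ell_k$, so they are collinear, and passing to the limit shows $x_j \in \ell$ for every $j$. Hence $\ell$ is a line transversal to $\KK$, with $x_i \in \ell \cap K_i$.

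Finally, choose $k$ large enough that $\dist(x_i, x_i^k) < \varepsilon$ for all $i \in [t]$. Then the points $x_i \in \ell \cap K_i$ and $x_i^k \in \ell_k \cap K_i^{1/k}$ witness the property for $\ell_k$, which contradicts the choice of $\ell_k$.

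The only delicate step is guaranteeing that the limiting configuration actually spans a line rather than collapsing to a single point. This is exactly where disjointness of the $K_i$ is used, together with the separate treatment of $t = 1$.
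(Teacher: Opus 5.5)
Your proof is correct and is essentially the paper's argument: assume the statement fails, choose a point of $\ell_{\delta_k}$ in each $K_i^{\delta_k}$, extract a convergent subsequence, and note that the limit points lie in the $K_i$ and on a common line transversal to $\KK$, which gives the contradiction. Your write-up is more careful than the paper's in two places: you negate the statement with the correct quantifier over $\ell$ (so no index $j$ has to be fixed before the limit line is known), and you justify that the limit points span a line, via disjointness and the closedness of collinearity.
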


\begin{proof}
For contradiction assume that there exists $\varepsilon > 0$ such that for
  every $\delta > 0$ there is a line $\ell_\delta$ transversal to $\KK^\delta$
  and $i = i(\delta) \in [t]$ such that for every $x \in \ell \cap K_i$ and
  $x_\delta \in \ell_\delta \cap K_i^\delta$ we have $\dist(x, x_\delta) \geq
  \varepsilon$. We take a sequence $(\delta_k)_{k=1}^\infty$ with $\delta_k \to
  0$, and we pick $j \in [t]$ so that $j = i(\delta_k)$ infinitely often.
  Passing to a subsequence, with a slight abuse of the notation, we can assume
  that $j = i(\delta_k)$ for every positive integer $k$.

  Now, for every positive integer $k$, we take a point $y^k \in K_1 \times
  \cdots \times K_t$ so that its $i$th coordinate $y^k_i$ is some point in
  $\ell_{\delta_k} \cap K_i^{\delta_k}$. By compactness, there is an
  accumulation point $y = (y_1, \dots, y_t)$ for points $y^k$. Then $y_1,
  \dots, y_t$ lie on a line $\ell$. In addition, as $\bigcap_{k=1}^{\infty}
  K_i^{\delta_k} = K_i$, we get that $y_i \in K_i$ for $i \in [t]$ and
  consequently $\ell$ is transversal to $\KK$. On the
  other hand $\dist(y_j, y_j^k) \geq \varepsilon$ by our assumptions as $y_j
  \in \ell \cap K_j$ and $y_j^k \in \ell_{\delta_k} \cap K_j^{\delta_k}$. This
  contradicts that $y$ is an accumulation point of $y^k$.
\end{proof}

Throughout this section we assume that $n$ is a fixed integer, and we use the notation $F_i^a$, $F_0$, $C_i^a$, $C_0$
and $\CC$ as introduced in the previous section, including the statement of
Lemma~\ref{l:disjoint_sets_general}. We also reuse the notation for the set of
lines $\LL_0$ according to~\eqref{e:L0} and the set $R \subseteq F_0$ according
to~\eqref{e:R}.

Given $\varepsilon > 0$, by
$(C_i^a)^\varepsilon$ and $C_0^\varepsilon$ we denote the (convex, open) set of
all points in distance less than $\varepsilon$ from $C_i^a$ and $C_0$
respectively. We also set $\CC^\varepsilon := \{C_0^\varepsilon\} \cup
\{(C_i^a)^\varepsilon \colon i \in [n], a \in \{3,4,5\} \}$; that is,
$\CC^\varepsilon$ is obtained from $\CC$ by inflating each set there by
$\varepsilon$.

\begin{lemma}
\label{l:retract}
 There is $\eta > 0$ such that the set $R$ is a retract of $R^{\eta}$. That is, there is a continuous map
  $r\colon R^{\eta} \to R$ which is identity on $R$. 
\end{lemma}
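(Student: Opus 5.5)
Since $R$ is the image of a smooth embedding of the compact sphere $S$, the natural route is to factor a retraction through the sphere $S$ itself. Consider the map $\phi\colon \R^{3n}\to\R^n$, $\phi(x_1,y_1,z_1,\dots,x_n,y_n,z_n) = (x_1,\dots,x_n)$. It is continuous, and its restriction to $R$ is the inverse of the homeomorphism $h\colon S\to R$ from the proof of Theorem~\ref{t:compact}. Let $\psi\colon \R^n\to\R^{3n}$ be given by
\[
\psi(r_1,\dots,r_n) = (r_1,r_1,r_1^2,\dots,r_n,r_n,r_n^2).
\]
This is continuous and maps $S$ onto $R$. I will then look for a retraction $\rho\colon U\to S$ defined on an open neighbourhood $U$ of $S$ in $\R^n$, and set $r := \psi\circ\rho\circ\phi$ on $R^\eta$.

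For $\rho$ I take the radial projection onto the sphere centred at $c=(1,\dots,1)$ with radius $1/\sqrt2$:
\[
\rho(u) = c + \frac{1}{\sqrt2}\,\frac{u-c}{\|u-c\|},
\]
defined and continuous on $U=\R^n\setminus\{c\}$. It fixes $S$ pointwise.

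It remains to choose $\eta$ so that $\phi(R^\eta)\subseteq U$. Since $\phi$ is $1$-Lipschitz (it is a coordinate projection), a point at distance less than $\eta$ from some point of $R$ is mapped within distance $\eta$ of a point of $\phi(R)=S$. Every point of $S$ has distance exactly $1/\sqrt2$ from $c$, so taking any $\eta<1/\sqrt2$, for instance $\eta=1/2$, ensures $\phi(R^\eta)$ avoids $c$.

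Finally, verify the retraction property. For $p=\psi(s)\in R$ with $s\in S$ we have $\phi(p)=s$, $\rho(s)=s$, and $\psi(s)=p$, so $r(p)=p$. Moreover $r$ takes values in $\psi(S)=R$, and it is continuous as a composition of continuous maps.

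There is no real obstacle here. The only point needing care is that the retraction must pass through the first coordinates $x_i$ rather than attempting a nearest-point projection onto $R$ in $\R^{3n}$; the latter would require a tubular-neighbourhood argument, which is unnecessary.
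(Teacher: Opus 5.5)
Your proof is correct, and it takes a different route from the paper.

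\textbf{The paper's route.} It checks that $R$ is a regular level set of a map $\R^{3n}\to\R^{2n+1}$, which makes $R$ a smoothly embedded submanifold. It then invokes the tubular neighbourhood theorem (Lee, Proposition~6.25) to obtain a retraction from a tubular neighbourhood $U$ of $R$. Finally it takes $\eta$ small enough that $R^\eta\subseteq U$, which tacitly uses compactness of $R$.

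\textbf{Your route.} You exploit the specific shape of $R$: it is the graph, over the round sphere $S$ in the $x$-coordinates, of the coordinatewise map $x_i\mapsto(x_i,x_i^2)$. So the retraction factors as three explicit maps: the coordinate projection $\phi$, the radial retraction of $\R^n\setminus\{c\}$ onto $S$, and the graph map $\psi$.

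\textbf{What each buys.} Your argument is fully elementary. It needs no smoothness, no rank computation, and no tubular-neighbourhood or compactness argument. It also gives an explicit $\eta$: your $r$ is defined on all of $\R^{3n}\setminus\phi^{-1}(c)$, and by the $1$-Lipschitz estimate any $0<\eta\le 1/\sqrt2$ works. The paper's argument does not depend on the particular form of $R$. It would work verbatim for any compact smoothly embedded submanifold, so it would survive a modification of $F_0$ that destroys the graph-over-a-sphere structure. The price is heavier machinery and a non-explicit $\eta$.
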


\begin{proof}
  First we realize that $R$ is a smooth manifold (smoothly) embedded in
  $\R^{3n}$. Indeed consider the map $F \colon \R^{3n} \to \R^{2n+1}$ given by
\[
  F(x_1, y_1, z_1, \dots, x_n, y_n, z_n) := \Bigl(x_1 - y_1, x_1^2 - z_1, x_2 - y_2,
  x_2^2 - z_2, \dots, x_n - y_n, x_n^2 - z_n, \sum_{i=1}^n (x_i - 1)^2\Bigr).
\]
  Then, according to~\eqref{e:R}, $R$ is the level set $F^{-1}(0, \dots, 0,
  \frac12)$. It is easy to check that the matrix of partial derivatives of $F$
  has full rank for every point in $R$. Indeed, there are $2n$ independent
  vectors corresponding to $\frac{\partial}{\partial y_i}$ and
  $\frac{\partial }{\partial z_i}$, all of them zero in the last coordinate
  $F_{2n+1} = \sum_{i=1}^n (x_i - 1)^2$ while at least one of the
  vectors $\frac{\partial}{\partial x_1}, \dots, \frac{\partial}{\partial x_n}$
  is nonzero when applied to $F_{2n+1}$ for a point in $R$. Thus,
  by~\cite[Corollary~5.14]{lee13}, $R$ is indeed a smoothly embedded manifold in
  $\R^{3n}$.

  Now let $U$ be a tubular neighborhood of $R$ as in Chapter~6
  of~\cite{lee13}. According to~\cite[Proposition~6.25]{lee13}, $R$ is a retract of
  $U$. For sufficiently small $\eta > 0$, $R^\eta \subseteq U$, thus $R$ is a
  retract of $R^\eta$ as well.  
\end{proof}

Now we consider the hyperplane
\[
F'_0 := \{(x_1,y_1,z_1, \dots, x_x,y_n,z_n) \in \R^{3n} \colon x_1 = y_1\}.
\]
Note that $F_0 \subseteq F'_0$ according to~\eqref{e:F0}

In the proof of Theorem~\ref{t:compact}, we have checked that every line in
$\LL_0$ meets $F_0$ in exactly one point. It is not so difficult to check that
this is also true for $\LL_0$ and $F'_0$ (we will check it in the proof below).
It is crucial for us to transfer this
observation from $\LL_0$ to line transversals of $\CC^\varepsilon$ provided
that $\varepsilon > 0$ is small enough. 

\begin{lemma}
  \label{l:beta_delta}
  There are $\beta, \delta  > 0$ with the following properties. 

  \begin{enumerate}[(i)]
 \item
  For every line $\ell_\delta$ transversal to
  $\CC^\delta$ the angle between $\ell_\delta$ and $F'_0$ is at least $\beta$. In
  particular, $\ell_\delta$
  meets $F'_0$ in exactly one point.
\item Let $R$ be defined according to~\eqref{e:R}. Then every line
  $\ell_\delta$ transversal to $\CC^\delta$ meets $F'_0$ in $R^\eta$ where
      $\eta > 0$ is the value from Lemma~\ref{l:retract}.
\end{enumerate}
\end{lemma}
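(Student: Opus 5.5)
The plan is to first prove both properties for the genuine transversals to $\CC$, using compactness, and then transfer them to $\CC^\delta$ with Lemma~\ref{l:close_transversal}.

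\emph{Step 1: transversals to $\CC$.} Let $\LL$ be the space of transversals to $\CC$. By Lemma~\ref{l:general} (as in the proof of Theorem~\ref{t:compact}), $\LL \subseteq \LL_0$. A line in $\LL_0$ has direction $(0,q_1,r_1q_1,\dots,0,q_n,r_nq_n)$ with all $q_i \neq 0$. The hyperplane $F'_0$ has normal $\nu = (1,-1,0,\dots,0)$, and $\langle \nu, (0,q_1,\dots)\rangle = -q_1 \neq 0$. Hence no line of $\LL$ is parallel to $F'_0$.

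To get a uniform bound, I parametrize $\LL$ by pairs $(x,y) \in C_0 \times C_1^3$ with $x,y$ on a common transversal. This is a closed subset of a compact set, hence compact. Since $C_0$ and $C_1^3$ are disjoint compact sets, $\dist(C_0,C_1^3) =: D > 0$, so $x \neq y$ and the direction $(y-x)/|y-x|$ depends continuously on the pair. The sine of the angle with $F'_0$ is $|\langle \nu,y-x\rangle|/(|\nu||y-x|)$. This is continuous and positive on the compact set, so it is at least $\sin 2\beta$ for some $\beta>0$. Also, for $\ell \in \LL_0$ the unique point of $\ell \cap F'_0$ is the point $(r_1,r_1,r_1^2,\dots)$, since that point lies in $F_0 \subseteq F'_0$. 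In particular, $\ell \cap C_0 = \ell\cap F'_0 \in R$ for every $\ell \in \LL$.

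\emph{Step 2: transfer to $\CC^\delta$.} Fix a small $\varepsilon>0$, to be chosen below, and take $\delta$ from Lemma~\ref{l:close_transversal} applied to $\KK=\CC$. For a transversal $\ell_\delta$ to $\CC^\delta$, the lemma gives $\ell \in \LL$ together with points:
\begin{itemize}
\item $x \in \ell\cap C_0$ and $x_\delta \in \ell_\delta \cap C_0^\delta$ with $\dist(x,x_\delta)<\varepsilon$;
\item $y \in \ell\cap C_1^3$ and $y_\delta \in \ell_\delta\cap (C_1^3)^\delta$ with $\dist(y,y_\delta)<\varepsilon$.
\end{itemize}
Since $|y-x|\ge D$ and the perturbation is at most $2\varepsilon$, the unit directions of $\ell$ and $\ell_\delta$ differ by $O(\varepsilon/D)$. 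For $\varepsilon$ small, the angle of $\ell_\delta$ with $F'_0$ is therefore at least $\beta$, which proves (i). A line not parallel to a hyperplane meets it in exactly one point.

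For (ii), Step 1 gives $x\in R \subseteq F'_0$. Hence $x_\delta$ is within $\varepsilon$ of $F'_0$ and within $\varepsilon$ of $R$. Moving along $\ell_\delta$ from $x_\delta$ to $F'_0$, the angle bound gives a distance of at most $\varepsilon/\sin\beta$. So $\ell_\delta\cap F'_0$ lies within $\varepsilon(1+1/\sin\beta)$ of $R$. Choosing $\varepsilon$ with $\varepsilon(1+1/\sin\beta)<\eta$ (and small enough for (i)) and then $\delta$ accordingly proves (ii). We may also shrink $\delta$ freely, since transversals to $\CC^{\delta'}$ for $\delta'<\delta$ are transversals to $\CC^\delta$.

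The main obstacle is the uniform angle bound in Step 1. It needs compactness of the space of transversals to $\CC$, realized via point pairs in two disjoint compact sets, combined with the observation from Lemma~\ref{l:general} that all $q_1\neq 0$. The transfer in Step 2 is routine quantitative bookkeeping.
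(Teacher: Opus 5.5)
Your proof is correct and follows the paper's argument. A compactness argument gives a uniform angle bound for transversals to $\CC$ (every line of $\LL_0$ is non-parallel to $F'_0$ because $q_1\neq 0$); this is transferred to $\CC^\delta$ via Lemma~\ref{l:close_transversal} using two sets at positive distance; and (ii) follows from the triangle-inequality estimate. The differences are cosmetic: you choose a single $\varepsilon$ serving both (i) and (ii) instead of the paper's two-stage choice of $\delta$ and $\delta'$, and you bound the distance along $\ell_\delta$ by $\varepsilon/\sin\beta$ rather than the in-plane distance by $\varepsilon'/\tan\beta$.
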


\begin{proof}
  We start with the proof of (i). First, we check that each line $\ell \in \LL_0$ meets $F'_0$ just in one
  point. Assume that $\ell$ has the equation as in~\eqref{e:L0}, considering
  $r_1, \dots, r_n, q_1, \dots, q_n$ fixed. An intersection point of $\ell$ and
  $F'_0$ is a point of form 
\[
  (r_1,r_1,r_1^2,\dots,r_n,r_n,r_n^2) +
  (0,q_1t,r_1q_1t,\dots,0,q_n t,r_n q_n t)
\]
 where $t \in \R$. Given that this intersection point belongs to $F'_0$, we get
  $t = 0$. Thus the intersection point is unique if it exists. On the other
  hand, if we set $t = 0$ we get a point that belongs both to $\ell$ and $F'_0$.

  By a compactness argument, similar to the proof of
  Lemma~\ref{l:close_transversal}, we realize that there is $\alpha > 0$ such
  that for every line $\ell$ transversal to $\CC$, the angle between $\ell$ and $F'_0$ is at
  least $\alpha$. Indeed, let $\alpha(\ell)$ be the angle between $\ell \in
  \LL_0$ and $F'_0$. For contradiction, assume that there is a sequence of
  lines $(\ell_k)_{k=1}^{\infty}$ transversal to $\CC$ such that
  $\alpha(\ell_k) \to 0$ as $k \to \infty$. (Note that a line $\ell$
  transversal to $\CC$ belongs to $\LL_0$ by Lemma~\ref{l:general}.) For every
  $C \in \CC$ and a positive integer $k$, consider a point $x^C_k \in \ell_k
  \cap C$. Recall that every $C \in \CC$ is compact. Thus the sequence
  $((x^C_k)\colon C \in \CC)_{k=1}^{\infty}$ has an accumulation point. By
  compactness, this accumulation point defines a line $\ell$ transversal to
  $\CC$ such that $\alpha(\ell) = 0$. This contradicts our earlier observation
  that $\ell$ intersects $F'_0$ in exactly one point.

  Now fix two sets $C_1, C_2 \in \CC$. Let $D > 0$ be the distance between
  $C_1$ and $C_2$. Take $\varepsilon > 0$ small enough where `small enough',
  depending only on $D$ and $\alpha$, 
  will be specified later on. Let $\delta$ correspond to $\varepsilon$
  according to Lemma~\ref{l:close_transversal} applied with the family $\CC$. 
  Let us consider a line $\ell_\delta$ transversal to
  $\CC^\delta$. Let $\ell$ be a line transversal to $\CC$ from the conclusion
  of Lemma~\ref{l:close_transversal}. For $i \in \{1,2\}$ let $x_i \in \ell \cap C_i$ and
  $x_{i,\delta} \in \ell_\delta \cap C^\delta_i$ be points such that
  $\dist(x_i,x_{i,\delta}) < \varepsilon$. Given that $\dist(x_1, x_2) \geq D$,
  $\varepsilon = \varepsilon(D, \alpha)$ can be set up so that the angle
  between $\ell$ and $\ell_\delta$ is less than $\alpha/2$. Thus the angle
  between $\ell_\delta$ and $F'_0$ is more than $\alpha - \alpha/2 = \alpha/2$.
  Then it is sufficient to set $\beta := \alpha/2$. This finishes the proof of
  (i).

  For (ii) we first set $\varepsilon' > 0$ so that $\varepsilon'(1 + 1/\tan
  (\beta)) < \eta$. Now we take the corresponding $\delta' > 0$ from
  Lemma~\ref{l:close_transversal} with respect to the family $\CC$. We can
  assume that $\delta < \delta'$ (where $\delta$ is from item (i)) as if the
  conclusion of item (i) is true for some value of $\delta$, then it
  is also true for any smaller value (thus we can decrease $\delta$ in (i) if
  necessary). Let $\ell_\delta$ be a transversal to
  $C^\delta$ (in particular a transversal to $C^{\delta'}$) and let $\ell$ be the
  corresponding transversal to $\CC$ from Lemma~\ref{l:close_transversal}
  applied with $\delta'$ and $\varepsilon'$ as above. Let $x_0 \in \ell \cap
  C_0$ and  $x_{0,\delta'} \in \ell_\delta \cap C_0^{\delta'}$ be points such that
  $\dist(x_0, x_{0,\delta'}) < \varepsilon'$. Let $y$ be the orthogonal
  projection of $x_{0,\delta'}$ to $F'_0$ and let $z$ be the intersection point
  of $\ell_\delta$ and $F'_0$; see Figure~\ref{f:distance}. 

\begin{figure}
    \centering       
    \includegraphics{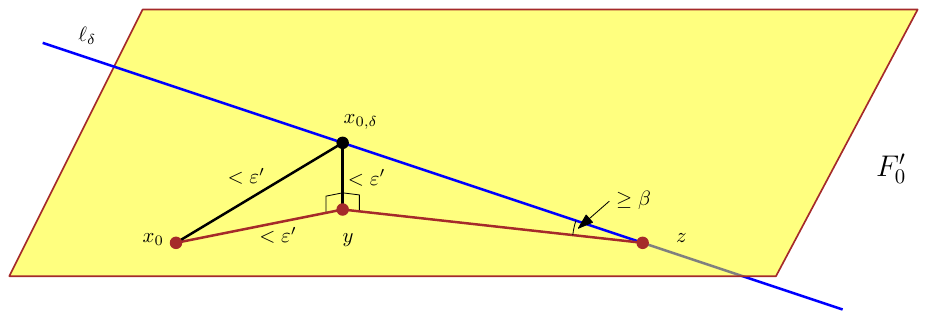}
    \caption{The points $x_0, x_{0,\delta}, y$ and $z$. We depict only the
    affine subspace spanned by these points.}
    \label{f:distance}      
\end{figure}

  Now we have that $x_0 \in R$ as in the proof of Theorem~\ref{t:compact} because
  $x_0$ is the intersection point of $\ell$ and $F_0$. We also have $\dist(x_0,
  y) < \varepsilon'$ as $\dist(x_0, x_{0,\delta'}) < \varepsilon'$ and $x_0 \in
  F'_0$. Finally, we have $\dist(y,z) \leq \varepsilon' / \tan(\beta)$ as
  $x_{0,\delta'}$ and $z$ belong to $\ell_\delta$, $\dist(x_{0,\delta'},y) \leq
  \varepsilon'$ and the angle between $\ell_\delta$ and $F'_0$ is at least
  $\beta$. Thus $\dist(x_0, z) \leq \varepsilon'(1 + 1/\tan(\beta)) < \eta$.
  Thus $z \in R^{\eta}$ as required. 
\end{proof}

The following theorem immediately implies Theorem~\ref{t:inflated_intro}.

\begin{theorem}
\label{t:delta_neighborhood}
  There is $\delta > 0$ such that $H_{n-1}(\LL)\ne 0$ where $\LL$ is the space of
 lines transversal to $\CC^\delta$.
\end{theorem}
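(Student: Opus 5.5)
We mimic the proof of Theorem~\ref{t:compact}: factor the identity on $H_{n-1}(S)$ through $H_{n-1}(\LL)$, where $\LL$ is now the space of lines transversal to $\CC^\delta$. Fix $\delta>0$ as in Lemma~\ref{l:beta_delta}, and let $\eta$ be as in Lemma~\ref{l:retract}. We will build maps
\[
S \xrightarrow{\;g\;} \LL \xrightarrow{\;f'\;} R^\eta \xrightarrow{\;r\;} R
\]
whose composition is the homeomorphism $h\colon S\to R$ from the proof of Theorem~\ref{t:compact}. Once this is done, the composition $H_{n-1}(S)\to H_{n-1}(\LL)\to H_{n-1}(R)$ is an isomorphism between nonzero groups, so $H_{n-1}(\LL)\neq 0$.

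The first map $g$ is exactly the one from the proof of Theorem~\ref{t:compact}. By Lemma~\ref{l:disjoint_sets_general} every line $g(r_1,\dots,r_n)$ is transversal to $\CC$. Since each set of $\CC$ is contained in the corresponding inflated set of $\CC^\delta$, these lines are also transversal to $\CC^\delta$. Hence $g$ is a continuous map $S\to\LL$.

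The map $f'\colon\LL\to F'_0$ sends $\ell_\delta$ to $\ell_\delta\cap F'_0$. By Lemma~\ref{l:beta_delta}(i) every line in $\LL$ meets the hyperplane $F'_0$ in exactly one point, so Observation~\ref{o:project_to_F} (with $F=F'_0$, $\AA=\LL$) shows $f'$ is continuous. By Lemma~\ref{l:beta_delta}(ii) its image lies in $R^\eta$. We then compose with the retraction $r$ from Lemma~\ref{l:retract}.

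It remains to compute $r\circ f'\circ g$. A line $g(r_1,\dots,r_n)$ belongs to $\LL_0$, and in the proof of Lemma~\ref{l:beta_delta}(i) we saw that such a line meets $F'_0$ only at the point with parameter $t=0$, namely $(r_1,r_1,r_1^2,\dots,r_n,r_n,r_n^2)\in R$. Since $r$ is the identity on $R$, we get $r\circ f'\circ g=h$, which is a homeomorphism. Functoriality of homology then gives the claim.

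The only delicate point is the one already handled in Lemma~\ref{l:beta_delta}. For inflated sets the transversals need not lie in $\LL_0$, and they need not meet $F_0$ at all, which is why we project to the hyperplane $F'_0$ and then retract onto $R$. With the earlier lemmas available, the argument above is essentially bookkeeping. The main thing to verify is that a single $\delta$ works for both parts of Lemma~\ref{l:beta_delta}; that is ensured there by shrinking $\delta$.
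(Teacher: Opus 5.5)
Your proposal is correct and matches the paper's proof essentially step for step. You take $\delta$ from Lemma~\ref{l:beta_delta}, project transversals to the hyperplane $F'_0$ via Observation~\ref{o:project_to_F}, retract $R^\eta$ onto $R$ using Lemma~\ref{l:retract}, and note that $r\circ f'\circ g$ is the homeomorphism $h\colon S\to R$, so $H_{n-1}(S)\ne 0$ factors through $H_{n-1}(\LL)$.
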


\begin{proof}
  Our desired value $\delta$ is the value from Lemma~\ref{l:beta_delta}. By
  this lemma, every $\ell$ transversal to $\CC^\delta$ meets $F'_0$ exactly in
  one point.
  Thus the map $f\colon \LL
  \to F'_0$ given by $f(\ell):=\ell \cap F'_0$ is continuous by
  Observation~\ref{o:project_to_F}. 

    In addition, $f(\LL)\subseteq R^{\eta}$ by the item (ii) of
    Lemma~\ref{l:beta_delta} where $\eta$ is the value from
    Lemma~\ref{l:retract}.
    Next, let $S$ be the $(n-1)$-sphere 
\[
  S=\Big\{(r_1,\dots,r_n)\in \R^n\colon \sum_{i=1}^n (r_i-1)^2=\frac{1}{2}\Big\}.
\]
and let $g$ be the continuous map from $S$ to $\LL$ given by 
\[
g((r_1,\dots,r_n))=\{(r_1,r_1,r_1^2,r_2,r_2,r_2^2,\dots,r_n,r_n,r_n^2)+(0,8^{n-1}t,8^{n-1}r_1
  t,0,8^{n-2}t,8^{n-2}r_2 t,\dots,0,t,r_n t)\}
\]
  defined the same way as in the proof of Theorem~\ref{t:compact}.
  It follows from Lemma~\ref{l:disjoint_sets_general}, that the
  image of $g$ is contained in the set of transversals to $\CC$, a fortiori, it
  is contained in $\LL$. Let $r \colon R^\eta \to R$ be the retraction map from
  Lemma~\ref{l:retract}.

  Then $f\circ g$ maps $(r_1,
  \dots, r_n)$ in $S$ to $(r_1, r_1, r_1^2, \dots, r_n, r_n, r_n^2) \in R
  \subseteq R^\eta$. Consequently,  $h := r \circ f \circ g$ is a bijection between $S$ and $R$, and because $S$ is compact, $h$
  is a homeomorphism. From $r$, $f$ and $g$ we get induced maps on the homology
  $H_{n-1}(S)\to H_{n-1}(\LL) \to H_{n-1}(R^\eta) \to H_{n-1}(R)$. Then, $H_{n-1}(\LL)\ne
  0$ as $H_{n-1}(S)\cong H_{n-1}(R)\ne 0$ and the composition of the three
  induced maps is the identity.  
\end{proof}

\bibliographystyle{alpha} 
\bibliography{references}

\newcommand{\etalchar}[1]{$^{#1}$}
\begin{thebibliography}{CdVGG14}

\bibitem[ABG26]{abg26}
Sergey Avvakumov, Marguerite Bin, and Xavier Goaoc.
\newblock Intersection patterns of set systems on manifolds with slowly growing
  homological shatter functions.
\newblock {\em arXiv preprint arXiv:2601.02920}, 2026.

\bibitem[BGP08]{bgp08}
Ciprian Borcea, Xavier Goaoc, and Sylvain Petitjean.
\newblock Line transversals to disjoint balls.
\newblock {\em Discrete Comput. Geom.}, 39(1-3):158--173, 2008.

\bibitem[CdVGG14]{cgg14}
\'Eric Colin~de Verdi\`ere, Gr\'egory Ginot, and Xavier Goaoc.
\newblock Helly numbers of acyclic families.
\newblock {\em Adv. Math.}, 253:163--193, 2014.

\bibitem[CGH24]{cgh24}
Otfried Cheong, Xavier Goaoc, and Andreas~F. Holmsen.
\newblock Some new results on geometric transversals.
\newblock {\em Discrete Comput. Geom.}, 72(2):674--703, 2024.

\bibitem[GP88]{gp88}
Jacob~E. Goodman and Richard Pollack.
\newblock Hadwiger's transversal theorem in higher dimensions.
\newblock {\em J. Amer. Math. Soc.}, 1(2):301--309, 1988.

\bibitem[GPP{\etalchar{+}}17]{gpptw17}
Xavier Goaoc, Pavel Pat\'ak, Zuzana Pat\'akov\'a, Martin Tancer, and Uli
  Wagner.
\newblock Bounding {H}elly numbers via {B}etti numbers.
\newblock In {\em A journey through discrete mathematics}, pages 407--447.
  Springer, Cham, 2017.

\bibitem[Had57]{hadwiger57}
H.~Hadwiger.
\newblock Ueber {E}ibereiche mit gemeinsamer {T}reffgeraden.
\newblock {\em Portugal. Math.}, 16:23--29, 1957.

\bibitem[Lee13]{lee13}
John~M. Lee.
\newblock {\em Introduction to smooth manifolds}, volume 218 of {\em Graduate
  Texts in Mathematics}.
\newblock Springer, New York, second edition, 2013.

\bibitem[Mon14]{montejano14}
Luis Montejano.
\newblock A new topological {H}elly theorem and some transversal results.
\newblock {\em Discrete Comput. Geom.}, 52(2):390--398, 2014.

\bibitem[MS26]{ms26}
Daniel McGinnis and Nikola Sadovek.
\newblock A necessary and sufficient condition for {$k$}-transversals.
\newblock {\em Adv. Math.}, 490:Paper No. 110829, 13, 2026.

\bibitem[PW90]{pw90}
R.~Pollack and R.~Wenger.
\newblock Necessary and sufficient conditions for hyperplane transversals.
\newblock {\em Combinatorica}, 10(3):307--311, 1990.

\bibitem[Wen90]{wenger90}
Rephael Wenger.
\newblock A generalization of {H}adwiger's transversal theorem to intersecting
  sets.
\newblock {\em Discrete Comput. Geom.}, 5(4):383--388, 1990.

\end{thebibliography}

\end{document}